\newcommand{\ie}{\emph{i.e.}\xspace}
\newtheorem{proposition}{Proposition}
\newtheorem{lemma}{Lemma}
\newtheorem{corollary}{Corollary}
\algnewcommand\algorithmicinput{\textbf{Input:}}
\algnewcommand\Input{\item[\algorithmicinput]}
\algnewcommand\algorithmicoutput{\textbf{Output:}}
\algnewcommand\Output{\item[\algorithmicoutput]}
\title{\Large \bf
Landmark Placement for Localization in a GPS-denied Environment
}
\author{Kaarthik Sundar$^{\dagger}$\thanks{$^{\dagger}$ Center for Nonlinear Studies, Los Alamos National Laboratory, NM.},\;
Shriram Srinivasan$^{\dagger}$,\;
Sohum Misra$^{*}$\thanks{$^{*}$ Dept. of Aerospace Engg., University of Cincinnati, OH. \texttt{}},\;
Sivakumar Rathinam$^{\ddagger}$\thanks{$^{\ddagger}$ Dept. of Mechanical Engg., Texas A\&M University,
College Station, TX.},\;
Rajnikant Sharma$^{*}$} \;
\pgfplotsset{compat=1.13}
\begin{document}

\maketitle
\thispagestyle{empty}
\pagestyle{empty}

\begin{abstract}
Path planning algorithms for unmanned aerial or ground vehicles, in many surveillance applications, rely on Global Positioning System (GPS) information for localization. However, disruption of GPS signals, by intention or otherwise, can render these plans and algorithms ineffective. This article provides a way of addressing this issue by utilizing stationary landmarks to aid localization in such GPS-disrupted or GPS-denied environment. In particular, given the vehicle's path, we formulate a landmark-placement problem and present algorithms to place the minimum number of landmarks while satisfying the localization, sensing, and collision-avoidance constraints. The performance of such a placement is also evaluated via extensive simulations on ground robots.
\end{abstract}

\begin{keywords}
    localization; unmanned vehicles; GPS-denied; estimation; interval covering problem
\end{keywords}

\section{Introduction} \label{sec:intro}
Unmanned Vehicles (UVs) are routinely used in civilian and military applications pertaining to surveillance and data gathering missions in civil infrastructure management, traffic management, crop monitoring \cite{Corrigan2007,Manyam2016}, to name a few. Over the past decade, a wide range of path planning, routing, and motion planning algorithms have been developed for UVs to handle heterogeneity \cite{Sundar2013,Levy2014,Sundar2015,Sundar2016}, resource constraints \cite{Rathinam2007}, fuel constraints \cite{Sundar2014,Sundar2012,Sundar2017a,Sundar2016a} etc. Most of these algorithms rely on GPS information to aid the vehicles to stay on its route. Disruption of the GPS signals, deliberate or otherwise can render these algorithms ineffective. For instance, entire cities in the states of Arizona and California experienced unintentional GPS interference \cite{Carroll2003,Hoey2005} for a period of five and thirty seven days respectively. This article presents a technique to aid localization of UVs in such GPS-denied/GPS-restricted environments. Localization entails the determination of the position and heading of a UV from the available measurements. Our approach assumes that routes for the UVs are decided \emph{apriori} using  various path planning and routing algorithms and cannot be changed. 

The routes for the UVs are rendered useless without localization capability, \ie,  knowledge of its  position and heading. The usual practice is to combine GPS and measurements obtained from Inertial Measurement Units (IMUs) with traditional filtering techniques to estimate the position and heading of the UVs. In GPS-denied environments, range and bearing sensors yield relative position and bearing measurements from known landmarks which are in turn used to estimate the states (position and heading) of the UVs; the state-estimation problem is then solved using an Extended Kalman Filter (EKF) or its information counterpart, the Extended Information Filter (EIF). It is known that the state estimates obtained using the estimation algorithm would provide consistent and bounded estimates only if enough measurements are available \cite{Song1992} or rather, in the parlance of control theory, if the the system is observable. If the system is observable and linear, a bound on the uncertainty in the estimation error can be related to the the eigenvalues of the observability grammian \cite{Song1992}. For the problem of localization of UVs, authors in \cite{Sharma2012} have shown that each UV requires bearing measurements from two distinct landmarks for observability of the system; the observability of the system is shown using Lie derivative theory. An important problem that arises in this context is that of landmark placement, \ie, given a route for the UV, to place landmarks so that the system is always observable; this would in turn ensure the error in state estimates of the position and heading of the UV, obtained using the estimation algorithms, remain bounded \cite{Song1992}. It is trivial to do so by placing finely-spaced landmarks on a grid of the area that the UV is monitoring. Nevertheless, if the area is large, then this can result in too many landmarks being placed. Furthermore, the sensor on the vehicle that measures the bearing from a given landmark is usually a camera which has its own restricted field of view. This article considers such a landmark placement problem that aims to place the minimum number of landmarks to ensure observability of the system at all times, while taking into account a collision avoidance constraint and the restricted field of view of the camera. The Fig.~\ref{fig:illustration} shows the path taken by the UV and  the placement of landmarks to ensure observability of the vehicle while accounting for the restricted field of view of the bearing sensor (a camera).

\begin{figure}
    \centering
    \includegraphics{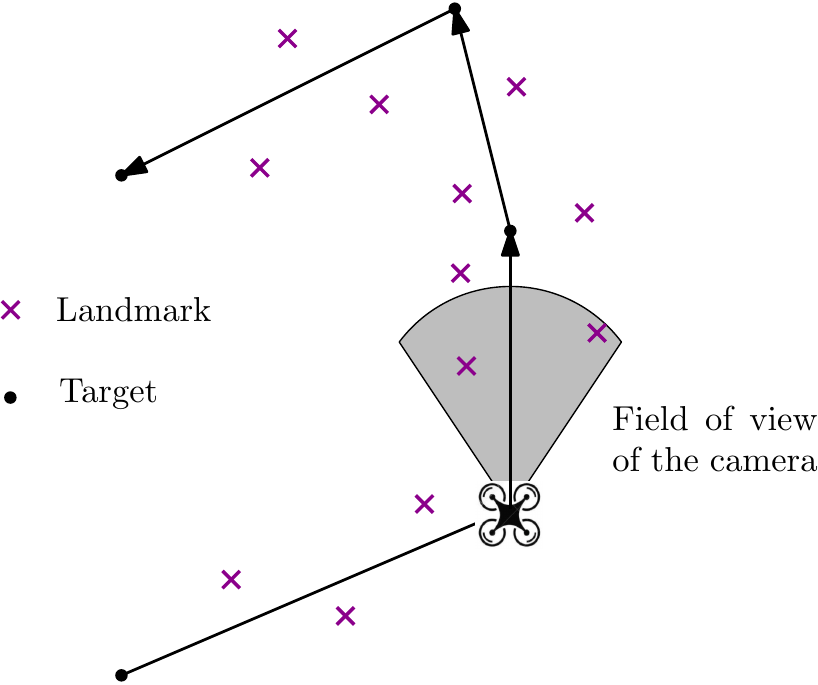}
    \caption{A placement of landmarks to facilitate observability, and hence, accurate localization for the UV.}
    \label{fig:illustration}
\end{figure}

\subsection{Related Work} \label{subsec:lit_review}
The idea of using landmarks to aid in localization has been addressed previously in  \cite{Sharma2012, Sundar2017}. There are also a number of techniques that use so-called proxy landmarks for localization. Proxy landmarks can refer to either the use of additional localization vehicles or radio signals received from suitably positioned neighbouring vehicles \cite{Kurazume1998,Sanderson1997,Roumeliotis2000}. In  \cite{Rathinam2015}, the authors develop a fast $13$-approximation algorithm to place landmarks to make the system observable given the path for the UVs. They assume that the bearing sensor's  (camera's) field of view is circular and formulate the problem as a generalization of a unit-disc cover problem \cite{Liu2014,Biniaz2015}. To the best of our knowledge, the previous works in the literature either neglect  the constraints imposed by the bearing sensor \cite{Sharma2012} or assume that the field of view is circular \cite{Rathinam2015}, both of which are unrealistic.  While there is theoretically the possibility of achieving a circular field of view using multiple cameras, obtaining bearing information from them can significantly increase the overhead on the image processing algorithms required to do so. 

We aim to remedy these shortcomings and accordingly, we rule out the possibility of having multiple cameras (and circular field of view) right at the outset and instead take into account the constraints imposed by the bearing sensor. The other major point of difference  from previous work  is that we assume a given discrete set of potential landmark locations rather than allowing the landmarks to be placed anywhere in the area of interest. Practically, our assumption is more realistic  because there are often regions in the area of interest where  terrain restrictions prevent placement of landmarks. In the following section we formally define the problem and detail the main contributions of this article.

\subsection{Landmark Placement Problem (LPP)} \label{subsec:lpp} 
As mentioned in Sec. \ref{sec:intro}, for observability, a UV requires bearing measurements from at least two distinct landmarks at any point in time. In this article, this condition is enforced as a graph-theoretic requirement to place the landmarks in the following landmark placement problem (LPP):

\noindent \textit{Given: A UV equipped with a camera possessing a viewing range of $R$ units and viewing angle of $\alpha (\leqslant 180^{\circ})$, a set of targets and a path taken by the UV from a source target to the destination target that visits every target exactly once, and a set of potential landmark locations where landmarks can be placed. \\
Objective: Placement of landmarks in a subset of potential locations such that the following conditions are satisfied -- (1) at least two landmarks have to be in the field of view of the UV's camera at any point in the UV's path, (2) two landmarks  cannot be coincidental from the point of view of the camera, (3) the distance between the UV and any landmark should be greater than $p$ units, and (3) the number of landmarks placed is a minimum.}

\noindent A feasible solution to the LPP is shown in Fig. \ref{fig:illustration}. The condition (1) enforces observability of the system and hence guarantees convergence of any estimation algorithm (EKF or EIF) to estimate the position and heading of the UV while the condition (2) and (3) are collision-avoidance constraints.

\subsection{Contributions} \label{subsec:contributions}
The contributions of this article are as follow: (1) the LPP is formulated as a graph theoretic problem, (2) a decomposition of the problem into smaller sub-problems (one for each edge) in the UV's path is presented, (3) a greedy algorithm is then developed  for optimal solution of each sub-problem, and finally (4) the solutions to the sub-problems are put together to construct a feasible solution to the LPP. The final placement of landmarks obtained via the proposed algorithm is subjected to extensive simulation experiments. In subsequent sections, we present the algorithm to solve the LPP followed by simulation on ground robots.   

\section{Algorithms and Analysis} \label{sec:algo}
Any point in the path of the UV is said to be covered by a landmark $k$ if $k$ is in the field of view of the UV's camera. Let $L = \{\ell_1, \dots, \ell_m\}$ denote the set of potential landmark locations and $T = \{t_1, \dots, t_n\}$, the set of targets. We remark that the set $L$ is the reduced set of potential landmark locations after preprocessing out those locations which do not satisfy the collision avoidance constraints. Without loss of generality, we assume that $t_1$ and $t_n$ are the source and destination targets where the UV's path begins and ends, respectively. Also, let the UV's path be denoted by $\{(t_1, t_2), (t_2, t_3), \dots, (t_{n-1}, t_n)\}$, where $(t_i, t_{i+1})$ is the $i$\textsuperscript{th} edge in the path. The number of edges in the UV's path is $(n-1)$. We shall say that a set of landmarks $L_1 \subseteq L$ covers an edge $(t_i, t_{i+1})$ if for any point on the edge, there exist at least two landmarks that cover the point. The objective of the LPP is to cover every edge in the UV's path. This allows for an edgewise decomposition. Hence, we will first present an algorithm to find the optimal number of landmarks and their placements to cover a given edge in the path of the UV.  

\subsection{An optimal landmark placement for an edge} \label{subsec:algo_edge}
Let $e = (t_i, t_{i+1})$ denote any edge in the UV's path. The objective is to solve the LPP for this edge $e$. To that end, let $(x_i, y_i)$ and $(x_{i+1}, y_{i+1})$ denote the coordinates of the targets $t_i$ and $t_{i+1}$, respectively. Without loss of generality, we assume that $(x_i, y_i) = (0,0)$ and $(x_{i+1}, y_{i+1}) = (d, 0)$. This is possible by a suitable rigid-body transformation (translation \& rotation) of the coordinate axes $\mathcal R_i$. For the sake of completeness, we present the coordinate transformations for this particular case.
\begin{enumerate}
    \item Translate the coordinate axis so that the origin coincides with $(x_i, y_i)$. This would result in the new coordinates of $t_i$ and $t_{i+1}$ being the origin and $(x_{i+1}-x_i, y_{i+1}-y_i)$, respectively.
    \item Rotate the coordinate axis by an angle $$\theta = \tan^{-1}\frac{y_{i+1}-y_i}{x_{i+1}-x_i};$$ such that the rotation aligns the positive direction of the $X$-axis with the edge $(t_i, t_{i+1})$ and let the new reference axes be denoted by $\mathcal R_f$. Also, let the coordinate of $t_{i+1}$ with respect to the new reference axes $\mathcal R_f$ be denoted by $(d,0)$ where, $d$ is the length of the edge.
\end{enumerate}
The coordinates of every potential landmark location $\ell \in L$ with respect to the new reference axes  $\mathcal R_f$ is also computed. Now, given that the edge is aligned along the positive $X$-axis of $\mathcal R_f$ with $t_i = (0,0)$ and $t_{i+1} = (d, 0)$, and the coordinates of every $\ell \in L$ in $\mathcal R_f$, we present an algorithm to check if a landmark placed at a location $\ell_k \in L$ for any $k=1, \dots, m$, would be within the camera's field of view for some part of the edge the UV traverses, and if so, compute that part of the edge. The following lemma gives a necessary condition for a landmark placed at a location $\ell_k \in L$ to be within the field of view of the camera as the UV traverses some part of the edge $e$. 

\begin{lemma} \label{lem:necessary}
Consider any potential landmark location $\ell_k \in L$. $\ell_k$ would be in the field of view of the camera as the UV traverses some part of the edge from $t_i$ to $t_{i+1}$ only if the coordinates, $(x_k, y_k)$, of $\ell_k$ in the new reference axes $\mathcal R_f$ lie within the region defined by 
\begin{enumerate}
    \item $|y_k| \leqslant R \sin (\alpha/2)$, 
    \item $x_k > 0$,
    \item $-\alpha/2 \leqslant \tan^{-1}(y_k/x_k) \leqslant \alpha/2$, and
    \item if $x_k \geqslant d$, then $(x_k-d)^2 + y_k^2 \leqslant R^2$
\end{enumerate}
where, $R$ is the camera's viewing range and $\alpha$ is the camera's viewing angle.
\end{lemma}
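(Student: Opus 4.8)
The plan is to model the camera's field of view explicitly as a circular sector and then show that each of the four conditions is a necessary consequence of the landmark being visible from at least one point of the edge. When the UV is located at $(p,0)$ for some $p \in [0,d]$ and its camera points along the direction of travel (the positive $X$-axis), the field of view is the set of points $(x,y)$ satisfying the range constraint $(x-p)^2 + y^2 \leqslant R^2$ together with the angular constraint that the bearing $\arctan\!\big(y/(x-p)\big)$ lies in $[-\alpha/2,\,\alpha/2]$ with $x > p$. The landmark $\ell_k = (x_k, y_k)$ is in the field of view for some part of the edge precisely when there exists $p \in [0,d]$ placing $\ell_k$ inside this sector, so to obtain the lemma I would show that the existence of such a $p$ forces each of the four stated inequalities.

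First I would introduce the horizontal separation $u = x_k - p$, so that as $p$ ranges over $[0,d]$ the quantity $u$ ranges over $[x_k - d,\, x_k]$. In these terms the angular constraint becomes $|y_k| \leqslant u\tan(\alpha/2)$ (which in particular requires $u > 0$) and the range constraint becomes $u^2 + y_k^2 \leqslant R^2$. Conditions~(2) and~(3) then follow from monotonicity: since visibility requires $u = x_k - p > 0$ for some $p \geqslant 0$, and the largest admissible $u$ is $x_k$, we must have $x_k > 0$, giving~(2). The bearing magnitude $|\arctan(y_k/u)|$ is decreasing in $u$, hence smallest at the largest separation $u = x_k$ attained at $p=0$; if even this smallest bearing exceeds $\alpha/2$ the landmark never enters the angular cone, which yields~(3).

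Next I would treat condition~(4) as the range counterpart of~(3) for landmarks lying beyond the edge. When $x_k \geqslant d$ the separation $u = x_k - p$ stays nonnegative over the whole edge and the distance $\sqrt{u^2 + y_k^2}$ is minimized at the smallest separation $u = x_k - d$, attained at the endpoint $p = d$. Hence if $(x_k-d)^2 + y_k^2 > R^2$ the landmark is out of range for every point of the edge, so visibility forces $(x_k-d)^2 + y_k^2 \leqslant R^2$, which is~(4). Finally, condition~(1) is obtained by requiring the angular and range constraints to hold simultaneously for a common $u$: the former gives $u \geqslant |y_k|/\tan(\alpha/2)$ while the latter gives $u \leqslant \sqrt{R^2 - y_k^2}$, so a feasible $u$ exists only if $|y_k|/\tan(\alpha/2) \leqslant \sqrt{R^2 - y_k^2}$; squaring and using $1 + \tan^2(\alpha/2) = \sec^2(\alpha/2)$ collapses this to $|y_k| \leqslant R\sin(\alpha/2)$.

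I expect the main obstacle to be bookkeeping rather than depth: one must correctly identify, for each inequality, the extremal point of the edge at which the relevant constraint is hardest to satisfy (the start $p=0$ for the bearing, the end $p=d$ for the range when $x_k \geqslant d$), and justify the required monotonicity in $u$. The only genuinely algebraic step is the simplification in condition~(1), where combining the two bounds on $u$ and eliminating $\tan(\alpha/2)$ in favor of $\sin(\alpha/2)$ must be done carefully; because $\alpha \leqslant 180^\circ$ ensures $\alpha/2 \leqslant 90^\circ$ and hence $\tan(\alpha/2) \geqslant 0$, the squaring step preserves the inequality and no sign issues arise.
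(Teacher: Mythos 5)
Your proof is correct, but it takes a genuinely different---and far more rigorous---route than the paper. The paper's entire proof is one sentence: it asserts that the four constraints ``define the region'' shown in its figure (the region of influence) and leaves all verification to geometric intuition; moreover, the paper's prose actually states the \emph{converse} direction (sufficiency: a landmark inside the region will be seen from some part of the edge), whereas the lemma claims necessity (``only if''). Your proof supplies exactly the missing analysis, in the correct direction: you parametrize the vehicle position by $p \in [0,d]$, reduce visibility to the two constraints $|y_k| \leqslant u\tan(\alpha/2)$ and $u^2 + y_k^2 \leqslant R^2$ on the separation $u = x_k - p$, and obtain each condition by identifying the extremal point of the edge at which the relevant constraint is easiest to satisfy---$p=0$ (largest $u$) for the sign and bearing conditions (2)--(3), $p=d$ (smallest $u$) for the range condition (4)---justified by the monotonicity of $|\arctan(y_k/u)|$ and of $u^2+y_k^2$ in $u$. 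Your derivation of condition (1), intersecting the lower bound $u \geqslant |y_k|/\tan(\alpha/2)$ from the angular constraint with the upper bound $u \leqslant \sqrt{R^2-y_k^2}$ from the range constraint, correctly collapses to $|y_k| \leqslant R\sin(\alpha/2)$ and explains where that bound comes from geometrically (the widest lateral extent of the sector), something the paper never makes explicit. What the paper's approach buys is brevity and a picture; what yours buys is an actual proof. The one point worth tightening is the degenerate case $\alpha = 180^{\circ}$ permitted by the problem statement, where $\tan(\alpha/2)$ is undefined: there the angular constraint is vacuous for $u > 0$ and condition (1) degenerates to $|y_k| \leqslant R$, which follows from the range constraint alone, so your argument survives with a one-line remark.
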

\begin{proof}
The proof is direct by making the observation that the constraints define the region in Fig~\ref{fig:infuence_region}. We shall refer to this region as the ``region of influence''. If the landmark $\ell_k$ is placed within this region of influence, then it will be in the field of view of the UV's camera as the vehicle traverses some part of the edge.
\begin{figure}
    \centering
    \includegraphics[scale=0.7]{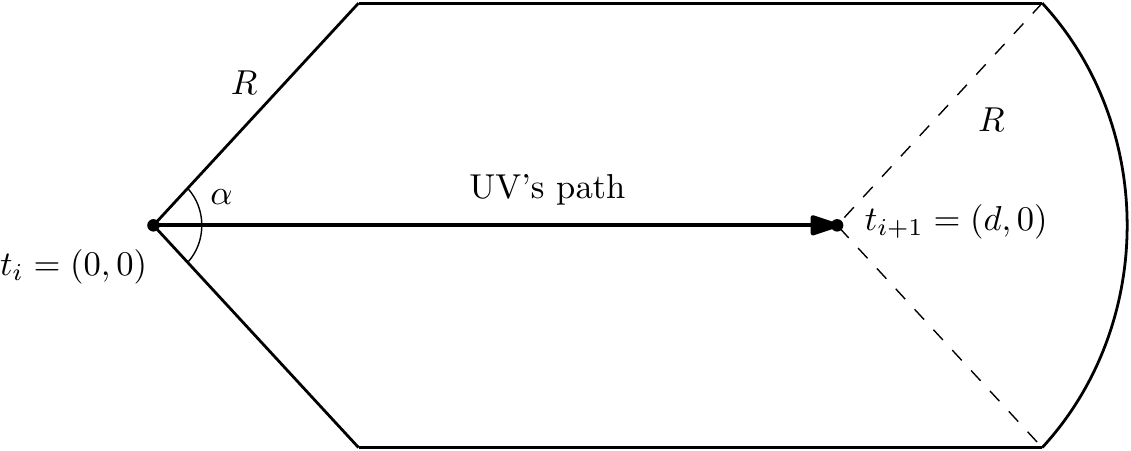}
    \caption{The region defined by the conditions in Lemma \ref{lem:necessary}.}
    \label{fig:infuence_region}
\end{figure}
\end{proof}

The next lemma gives the interval for which a landmark placed in the region of influence would be in the camera's field of view.

\begin{lemma} \label{lem:interval}
Consider a potential landmark location $\ell_k \in L$ whose coordinate with respect to $\mathcal R_f$, $(x_k, y_k)$, is within the region of influence, as defined by Lemma \ref{lem:necessary}. Then $\ell_k$ would be in the camera's field-of-view as the UV travels from the point $t^s$ to $t^d$, where the coordinates of $t^s$ and $t^d$ in $\mathcal R_f$ are given by
\begin{flalign}
t^s &= \left(\max\left\{x_k-\sqrt{ R^2-y_k^2}, 0\right\}, 0\right), \label{eq:ts} \\
t^d &= \left(\min\left\{x_k-|y_k|\operatorname{cot}\left(\frac{\alpha}2\right), d\right\}, 0\right). \label{eq:td}
\end{flalign}
\end{lemma}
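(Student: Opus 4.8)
The plan is to read off the two visibility conditions directly from the geometry and then solve each of them for the UV's position along the edge. I would parametrize the UV's location on $e$ as $p(t) = (t,0)$ for $t \in [0,d]$, and use that the camera points along the heading (the positive $X$-axis), so that its field of view at $p(t)$ is the circular sector of radius $R$ and half-angle $\alpha/2 \leqslant 90^{\circ}$ opening about the positive $X$-direction. The landmark $\ell_k = (x_k,y_k)$ is visible from $p(t)$ precisely when the displacement vector $(x_k - t,\, y_k)$ lies in this sector, which is captured by two inequalities: a \emph{range} condition $(x_k-t)^2 + y_k^2 \leqslant R^2$ and an \emph{angular} condition that the angle between $(x_k-t,\,y_k)$ and the positive $X$-axis be at most $\alpha/2$. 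The interval of visibility is then simply the set of $t \in [0,d]$ satisfying both.

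First I would solve the range condition for $t$. Since $|y_k| \leqslant R\sin(\alpha/2) \leqslant R$ by condition (1) of Lemma~\ref{lem:necessary}, the quantity $\sqrt{R^2 - y_k^2}$ is real and the range condition is equivalent to $x_k - \sqrt{R^2 - y_k^2} \leqslant t \leqslant x_k + \sqrt{R^2 - y_k^2}$. Next I would solve the angular condition. Because $\ell_k$ lies in the region of influence we have $x_k > 0$ and the landmark is in front of the vehicle, so the relevant angle is $\tan^{-1}\!\big(|y_k|/(x_k-t)\big)$, which increases monotonically from $0$ toward $90^{\circ}$ as $t$ grows toward $x_k$. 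Requiring it to be at most $\alpha/2$ is therefore equivalent to the single upper bound $t \leqslant x_k - |y_k|\operatorname{cot}(\alpha/2)$.

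I would then combine the two. The key observation is that the angular upper bound is never weaker than the range upper bound, since $x_k - |y_k|\operatorname{cot}(\alpha/2) \leqslant x_k \leqslant x_k + \sqrt{R^2 - y_k^2}$; hence the feasible set of $t$ is exactly the interval $\big[\,x_k - \sqrt{R^2-y_k^2},\; x_k - |y_k|\operatorname{cot}(\alpha/2)\,\big]$. Intersecting this with the admissible range $t \in [0,d]$ of the edge replaces the lower endpoint by its maximum with $0$ and the upper endpoint by its minimum with $d$, which yields precisely the coordinates of $t^s$ and $t^d$ in \eqref{eq:ts}--\eqref{eq:td}.

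Finally, to confirm that the stated interval is genuinely nonempty (so that $t^s$ and $t^d$ are well ordered), I would verify $t^s \leqslant t^d$ from the region-of-influence conditions. The comparison $x_k - \sqrt{R^2-y_k^2} \leqslant x_k - |y_k|\operatorname{cot}(\alpha/2)$ reduces, after squaring the nonnegative quantities and using $1 + \operatorname{cot}^2(\alpha/2) = \csc^2(\alpha/2)$, exactly to condition (1) of Lemma~\ref{lem:necessary}; the comparison of the lower endpoint against $d$ uses condition (4); and nonnegativity of the upper endpoint uses condition (3). The step I expect to need the most care is the monotonicity argument showing that the angular constraint yields \emph{only} an upper bound on $t$ (so it never competes with the range lower bound) and that it dominates the range upper bound; once that is made precise, the remaining algebra is routine.
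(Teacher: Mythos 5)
Your proof is correct and rests on the same geometry as the paper's own argument: the paper obtains exactly the same two endpoint formulas by reading off its figures that the landmark enters the field of view through the range-$R$ arc (giving $x_s = x_k - \sqrt{R^2 - y_k^2}$) and exits through the boundary ray at angle $\alpha/2$ (giving $x_d = x_k - |y_k|\cot(\alpha/2)$ via the right-triangle argument). The only substantive difference is one of rigor: you derive, rather than assume from figures, that the set of visible positions is a single interval whose lower endpoint is governed solely by the range constraint and whose upper endpoint solely by the bearing constraint --- using the monotonicity of the bearing angle in $t$ together with the domination $x_k - |y_k|\cot(\alpha/2) \leqslant x_k \leqslant x_k + \sqrt{R^2 - y_k^2}$ --- and you additionally verify $t^s \leqslant t^d$ from conditions (1), (3), and (4) of Lemma \ref{lem:necessary}, a nonemptiness check the paper's proof leaves implicit.
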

\begin{proof}
Let $(x_s, 0)$ be the position of the UV when the landmark placed at the location $\ell_k \in L$ enters the field of view of the camera. The Fig.~\ref{fig:xi} illustrates this case. 
\begin{figure}[!h]
    \centering
    \includegraphics[scale=0.8]{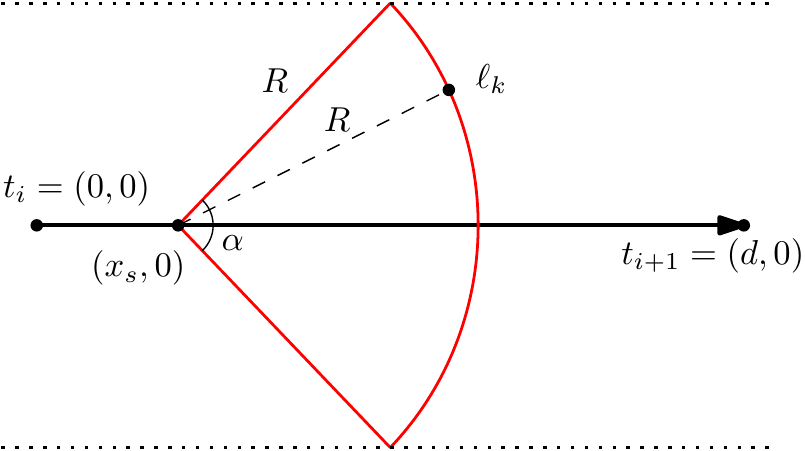}
    \caption{$\ell_k$ entering the camera's field of view.}
    \label{fig:xi}
\end{figure}
Then, it is clear from \ref{fig:xi} that the distance between the points $(x_s,0)$ and $(x_k, y_k)$ is exactly $R$ units, which yields $x_s = x_k - \sqrt{ R^2-y_k^2}$. Hence, the position of the vehicle  $t^s$ when the landmark at $\ell_k$ enters the camera's field of view is given by the Eq. \eqref{eq:ts}. 

Similarly, let $(x_d, 0)$ be the position of the UV when the landmark placed at the location $\ell_k \in L$ leaves the field of view of the camera. The Fig.~\ref{fig:xf} illustrates this case. For ease of exposition, we shall refer to $(x_d, 0)$ as $A$ and $(x_k,0)$ as $B$. The perpendicular distance between $\ell_k$ and the edge is given by $|y_k|$. Then, the length of the segment from $A$ to $B$ is $|y_k|\operatorname{cot}(\alpha/2)$. This in turn implies that $x_d = x_k - |AB| = x_k - |y_k|\operatorname{cot}(\alpha/2)$, proving the claim.
\begin{figure}[!h]
    \centering
    \includegraphics[scale=0.8]{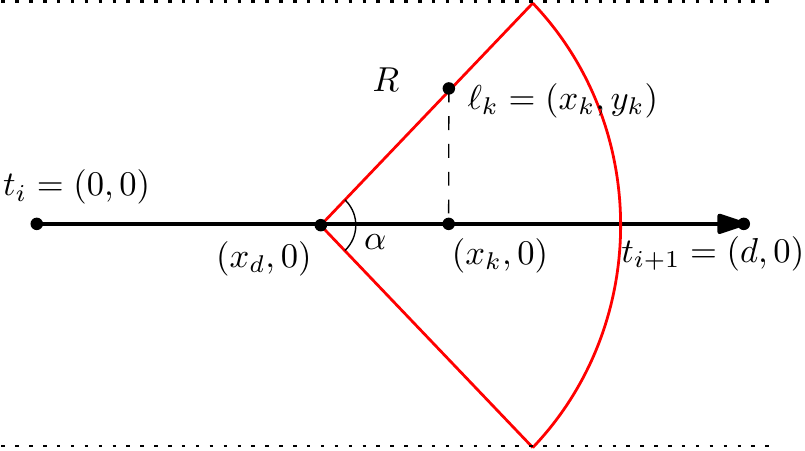}
    \caption{$\ell_k$ leaving the camera's field of view.}
    \label{fig:xf}
\end{figure}
\end{proof}

\begin{corollary} \label{cor:circ}
If the field of view of the camera is circular then, 
\begin{flalign}
t^s &= \left(\max\left\{x_k-\sqrt{ R^2-y_k^2}, 0\right\}, 0\right), \label{eq:ts_c} \\
t^d &= \left(\min\left\{x_k+\sqrt{ R^2-y_k^2}, 0\right\}, 0\right). \label{eq:td_c}
\end{flalign}
\end{corollary}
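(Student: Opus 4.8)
The plan is to treat the circular field of view as the degenerate case of Lemma~\ref{lem:interval} in which the angular restriction disappears, so that visibility of $\ell_k$ is governed purely by the range constraint. Concretely, a circular field of view corresponds to removing the half-angle bound, i.e.\ taking $\alpha = 360^\circ$ (equivalently, the cone of vision subtends the whole plane), so that the only way for $\ell_k$ to enter or leave the camera's view as the UV moves along the positive $X$-axis is by crossing the circle of radius $R$ centred at $\ell_k$, whose equation in $\mathcal R_f$ is $(x - x_k)^2 + y_k^2 = R^2$.

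First I would reuse the entry-point argument verbatim. The derivation of $t^s$ in Lemma~\ref{lem:interval} uses only the fact that $\ell_k$ becomes visible at the instant the UV is exactly $R$ units away, which is the range constraint and is independent of the viewing angle. Hence the near-side intersection of the edge with the circle gives $x_s = x_k - \sqrt{R^2 - y_k^2}$, and clamping to the start of the edge yields \eqref{eq:ts_c}, identical to \eqref{eq:ts}.

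Second, for the exit point I would replace the angular exit condition by the far-side range condition. In Lemma~\ref{lem:interval}, $\ell_k$ leaves the view when the bearing angle reaches $\alpha/2$; with a circular field of view that event never occurs, and $\ell_k$ instead remains visible until the UV leaves the disc of radius $R$ on the far side. By the symmetry of the circle $(x - x_k)^2 + y_k^2 = R^2$ about the vertical line $x = x_k$, the far intersection with the $X$-axis is the mirror image of the near one, giving $x_d = x_k + \sqrt{R^2 - y_k^2}$. Clamping to the far endpoint of the edge then produces \eqref{eq:td_c}.

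I do not expect a genuine obstacle here, since the result is a specialization rather than a strengthening of Lemma~\ref{lem:interval}; the only point requiring care is the clamping bound in \eqref{eq:td_c}, where the cap on $x_d$ should be the edge length $d$ (as in the $t^d$ expression of Lemma~\ref{lem:interval}) rather than $0$, so that the interval $[\,t^s, t^d\,]$ correctly degenerates to exactly the portion of the edge lying inside the disc of radius $R$ about $\ell_k$. I would flag this so the displayed formula is read with the intended upper bound $d$.
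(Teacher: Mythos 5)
Your proof is correct and matches the paper's intent: the paper states this corollary without any separate proof, treating it as an immediate specialization of Lemma~\ref{lem:interval} in which the angular exit condition is replaced by the far-side range condition (symmetry of the circle about $x = x_k$), which is exactly your argument. You are also right to flag the clamp in \eqref{eq:td_c}: the upper bound there should be the edge length $d$ rather than $0$ (a typo in the paper's statement), so that $t^d$ is parallel in form to the expression in Lemma~\ref{lem:interval}.
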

The Lemmas \ref{lem:necessary} and \ref{lem:interval} together give the conditions under which a landmark placed at a location $\ell_k \in L$ for any $k = 1,\dots,m$ can cover some part of the edge and if so, allows the computation of the interval (part of the edge) that it can cover. This computation can be repeated for every potential landmark location $\ell \in L$ to obtain the interval defining a part of the edge that can be covered by a landmark placed at $\ell$; let $I_{\ell}$ denote the interval defined by Lemma \ref{lem:interval} for the landmark location $\ell \in L$. We remark that $I_{\ell}$ can be empty if the coordinates of $\ell$ does not satisfy the necessary condition of Lemma \ref{lem:necessary}. Given these intervals, we now develop a greedy algorithm to find the minimal set of locations $L_1 \subseteq L$, where landmarks can be placed to cover the entire edge. This is a generalization of the well-studied ``Interval Covering'' problem and is formally stated as follows: given an interval $[a, b]$ and a set of intervals $[a_i, b_i]$, for $i\in \{1, \dots, n\}$, choose a minimal subset $S \subset\{1, \dots,n\}$ such that $[a,b] = \bigcup_{i\in S} [a_i, b_i]$. The problem of covering an edge with a set of landmarks can be considered as a 2-interval covering problem that seeks a subset of intervals to enclose all the points in the edge at least twice. A pseudocode of the algorithm is given in Algorithm \ref{algo:pseudocode}. 

\begin{algorithm}
\caption{Greedy algorithm}\label{algo:pseudocode}
\begin{algorithmic}[1]
\vspace{1ex}
\Input $I_{\ell} = [a_{\ell}, b_{\ell}] \,\,\forall \ell \in L$ and edge $e=(t_i, t_{i+1})$
\Output $L_1 \subseteq L$ such that $L_1$ covers the edge $e$
\State $I \gets \bigcup_{\ell \in L} I_{\ell}$
\State $L_1 \gets \emptyset$
\State $c \gets 0$ \Comment{x-coordinate of $t_i$ in $\mathcal R_f$}
\State $\hat L \gets$ two intervals that contain $c$ with the largest and second largest endpoint
\State $c \gets \min\{b_{\ell}, \ell \in \hat L\}$, $c_1 \gets \max\{b_{\ell}, \ell \in \hat L\}$
\State $L_1 \gets L_1 \bigcup \hat L$ and $I \gets I \setminus \hat L$
\While{$c < d$} \Comment{check if the edge is covered}
    \State $\ell_1 \gets \operatorname{argmax}_{I_{\ell} \in I} \{b_{\ell} : I_{\ell} \text{ covers } c\}$, \label{step:prob}
    \State If Step \ref{step:prob} is infeasible; exit \Comment{problem infeasible}
    \State $L_1 \gets L_1 \bigcup \{\ell_1\}$ and $I \gets I \setminus \{\ell_1\}$
    \State $c \gets \min (c_1, b_{\ell_1})$, $c_1 \gets \max (c_1, b_{\ell_1})$ 
\EndWhile
\end{algorithmic}
\end{algorithm}

The Algorithm \ref{algo:pseudocode} is a greedy algorithm for a 2-interval covering problem which is also optimal. The intuition behind the algorithm is as follows: We shall use that analogy that the edge is a tunnel with an entrance and and exit and intervals as lamps with the length of the interval denoting its lighting range. The goal of the 2-interval cover problem is then to light the tunnel using the minimum number of lamps such that each point in the tunnel falls in the lighting range of at least two lamps. The algorithm first chooses two lamps which cover the entrance of the tunnel and has the longest and the second longest lighting range into the tunnel. Then, it continuously choose the lamp which overlaps the smaller of the two current lighting ranges and has the longest range further into the tunnel, until the exit is reached. 

\begin{proposition} \label{lem:greedy}
The greedy algorithm presented in Algorithm \ref{algo:pseudocode} is optimal for the 2-interval covering problem.
\end{proposition}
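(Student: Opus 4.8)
The plan is to establish two things separately: that the set $L_1$ returned by Algorithm \ref{algo:pseudocode} is a valid $2$-cover of the edge $[0,d]$ (or that infeasibility is detected correctly), and that no $2$-cover uses fewer intervals. I would first record the loop invariant that drives both parts. Writing the running state of the algorithm as the pair $(c,c_1)$ with $c\le c_1$, I claim that after every update the segment $[0,c]$ is covered by at least two chosen intervals and $[0,c_1]$ by at least one. This is proved by induction on the iterations: the initialization selects the two intervals containing $0$ with the largest right endpoints $b_1\ge b_2$, so $[0,b_2]$ is doubly covered and $[0,b_1]$ singly covered, matching $(c,c_1)=(b_2,b_1)$; each loop step adds an interval $[a_{\ell_1},b_{\ell_1}]$ that contains the current double-frontier $c$, so it overlaps the existing single cover on $[c,\min(c_1,b_{\ell_1})]$ and promotes that stretch to double coverage, producing the new frontier $(\min(c_1,b_{\ell_1}),\max(c_1,b_{\ell_1}))$. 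Hence when the loop halts with $c\ge d$ the whole edge is doubly covered, so $L_1$ is feasible.

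The engine of the optimality argument is a one-step dominance property. Order configurations by the coordinatewise partial order, $(c,c_1)\succeq(c',c_1')$ iff $c\ge c'$ and $c_1\ge c_1'$. Fixing a double-frontier $c$, any admissible interval $[a,b]$ with $a\le c\le b$ yields the successor configuration $(\min(c_1,b),\max(c_1,b))$. I would verify, by a short case analysis on whether $b\ge c_1$, that among all intervals admissible at $c$ the one with the largest right endpoint $b$ — precisely the choice made by $\ell_1\gets\operatorname{argmax}\{b_\ell : I_\ell\text{ covers }c\}$ — produces a configuration dominating that of every other admissible interval. This is the exact sense in which the greedy choice is locally best, and it is the key technical fact.

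With these in hand I would prove optimality by a staying-ahead/exchange argument against an arbitrary optimal solution $O$. Any feasible solution must contain at least two intervals covering the entrance $0$; replacing the two such intervals of $O$ having the largest right endpoints by greedy's first two picks cannot destroy double coverage — each greedy interval contains $0$ and reaches at least as far, hence covers at least the portion of the edge that the replaced interval covered — and leaves $|O|$ unchanged, so we may assume $O$ begins with greedy's first two intervals. For the inductive step, suppose $O$ agrees with greedy on its first $j$ picks and thus shares the frontier $(c_j,c_{1,j})$. Since $O$ doubly covers past $c_j$, it must contain a further interval covering the point $c_j$, and by the $\operatorname{argmax}$ rule that interval's right endpoint is at most $b_{j+1}$, greedy's $(j{+}1)$-st selection; the one-step dominance then shows that swapping it for greedy's interval preserves double coverage and cardinality. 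Iterating, greedy's choices substitute into $O$ one at a time without increasing the count, so greedy terminates in at most $|O|$ intervals and is optimal. The same frontier domination shows that if Step \ref{step:prob} ever fails while $c<d$, then no admissible interval covers a point every solution must doubly cover, so the instance is genuinely infeasible.

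The main obstacle I anticipate is feasibility preservation in the inductive exchange: when swapping out an interval of $O$ that lies deep in the configuration, I must ensure that no point in the singly-covered buffer $[c_j,c_{1,j}]$ silently loses its second layer. Handling this cleanly requires bookkeeping of which interval of $O$ supplies the second cover over each sub-segment and arguing that greedy's longer-reaching substitute continues to supply it; tracking two frontiers simultaneously, rather than the single reach of the classical one-cover greedy, is what makes the argument delicate. Once that bookkeeping is in place, the staying-ahead conclusion is routine.
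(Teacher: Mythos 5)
Your proposal is correct and rests on the same core idea as the paper's own proof: an exchange argument in which the greedy picks, having maximal right endpoints among intervals covering the current point, are substituted into an arbitrary optimal solution without increasing its size. The paper carries out this exchange only at the entrance point (replacing the optimal solution's two intervals covering the start with the two of maximal and second-maximal right endpoint) and compresses the inductive continuation into a single sentence, whereas your loop invariant, configuration-dominance lemma, and explicit staying-ahead induction --- including the buffer-zone bookkeeping over $[c_j, c_{1,j}]$ and the infeasibility case --- supply precisely the details the paper leaves implicit.
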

\begin{proof}
Consider an optimal set of intervals $L_1$ for the 2-interval covering problem and let the set $L_{t_1} \subseteq L_1$ denote the set of intervals covering the point $t_1 = (0,0)$ (the starting point) in the edge. Clearly, $|L_{t_1}| = 2$. If both these intervals from $L_{t_1}$ are replaced with two other intervals $[a_i, b_i]$ and $[a_j, b_j]$ from $L$ whose endpoint $b_i$ and $b_j$ are the maximal and second maximal, the remaining uncovered part of the edge, $(b_j,d]$, would be covered of the remaining intervals in the optimal solution \ie $L_1 \setminus L_{t_1}$. Hence, the remaining uncovered part of the edge can be covered with no more intervals than the analogous uncovered part from the optimal solution. Therefore, a solution constructed using the intervals $[a_i, b_i]$ and $[a_j, b_j]$ and the optimal solution for the remaining uncovered part of the edge, $L_1 \setminus L_{t_1}$ will also be optimal, proving the claim. 
\end{proof}

\subsection{Landmark placement for the entire path} \label{subsec:path}
The potential set of locations on which landmarks can be placed so that the UV can be localized at all time instants as it traverses its path can be solved by decomposing the problem over the edges. We remark that, despite the fact that the LPP is being solved to optimality over each edge, a solution obtained for the LPP for the entire path using the optimal solutions for each edge in the path need not be optimal for the entire path. This makes the algorithm for the entire UV path a heuristic approach. The solution to the LPP for the entire path is computed by taking a union of the potential locations obtained for every edge in the UV's path. In the next section, we evaluate the effectiveness of the proposed landmark placement with extensive simulations. 

We remark that the landmark placement for the entire path would not ensure complete observability at the corners. For instance consider the Fig.~\ref{fig:corner}. 
\begin{figure}[!h]
    \centering
    \includegraphics{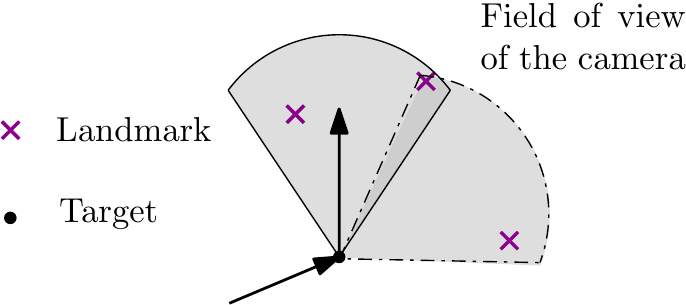}
    \caption{Observability at the  intersection of two edges}
    \label{fig:corner}
\end{figure}
Here, the UV would be observable only at the beginning of the turn and then at the end of the turn; this is assuming that the UV is a ground robot or a quadracopter that can make a turn around its axis while remaining stationary at a point. This is reasonable from a localization perspective because when the UV makes a turn, it remains stationary and the position of the UV does not change. This issue is averted if we assume the camera's field of view is circular (see Corollary \ref{cor:circ}), which is not practical. Nevertheless, if needed, the algorithm can be modified suitably to make the UV observable at all times in a corner. Due to page limitations, this modification to the algorithm is not discussed here and is delegated to future work. 

\section{Results} \label{sec:results}
\subsection{Simulation Results} \label{subsec:sim}
For the simulation experiments, two cases, one with 6 targets and another with 7 targets were considered. For both the cases, the path taken by the vehicle was computed \emph{a priori} using the Lin-Kernighan-Helsgaun (LKH) heuristic \cite{Lin1973}. The way points were generated randomly in a grid of size $4\times8$ . The viewing angle and range of the camera was set to $50^{\circ}$ and $2$ meters (m), respectively. A viewing angle and range were chosen to keep the simulation setup practical. A high controller gain was chosen to ensure a high turn rate for the vehicle. The set of grid points in a grid of size $0.5 \times 0.5$ was chosen as the set of potential landmark locations. The vehicle's velocity was kept constant at $0.2$ m/s. A $p$ value of $0.05$ m (collision avoidance distance) was used for the separation distance between the vehicle and any location where the landmarks were to be placed. Although this distance can be increased, a small value was chosen to make the instances feasible. The greedy algorithm presented in Sec. \ref{sec:algo} is then used to compute the subset of locations where landmarks are to be placed. Using the landmarks at the locations obtained from the algorithm, an EIF was used to estimate the position and heading of the vehicle using the bearing measurements. The results of both the simulations are shown in Fig. \ref{fig:exp_6WP_22LM_traj} and Fig. \ref{fig:exp_7WP_31LM_traj}.

\begin{figure}[h!]
    \centering
    \includegraphics[width = 85mm,keepaspectratio]{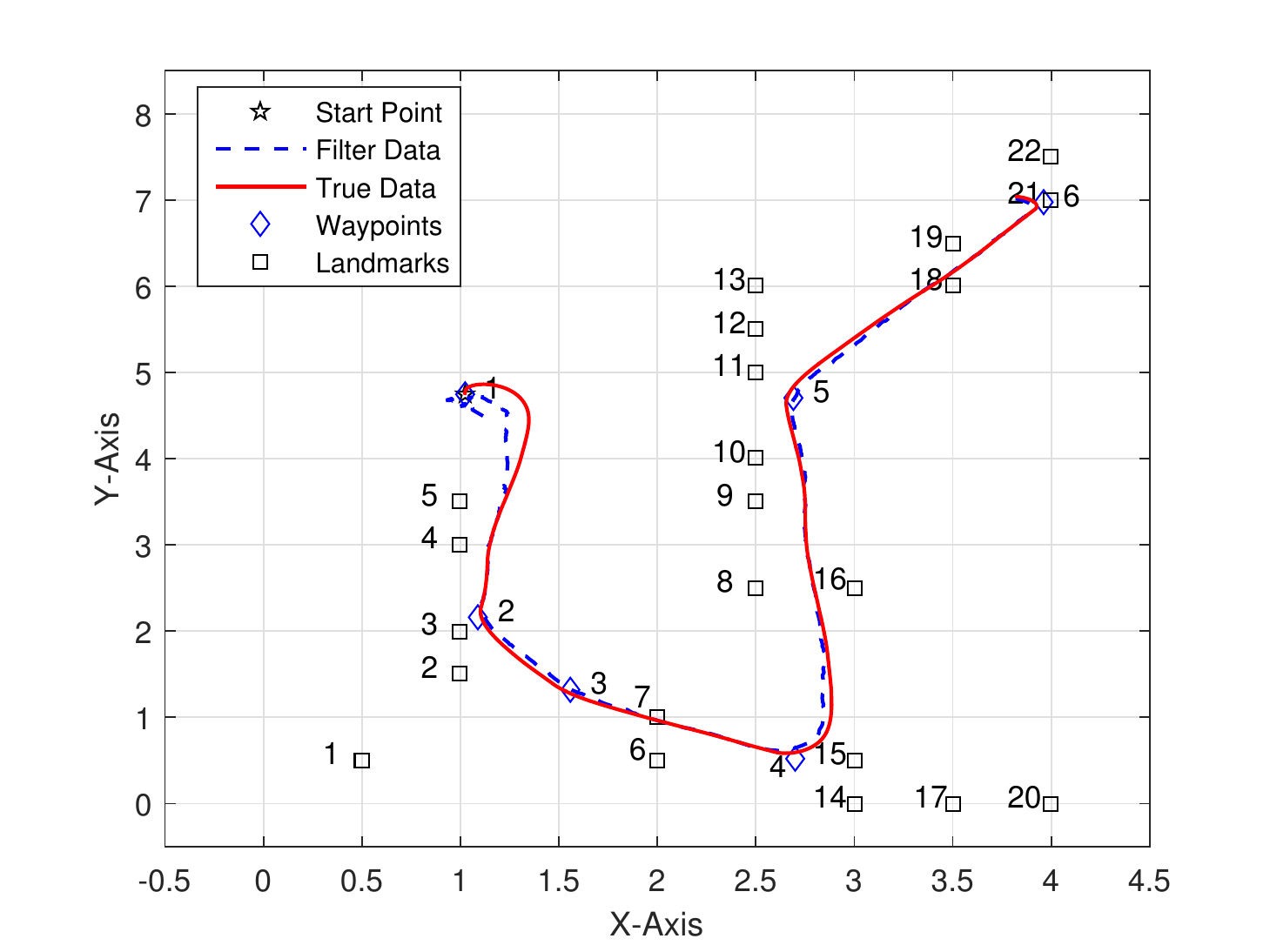}
    \caption{True and estimated trajectories of the vehicle with 6 targets (way points) and 22 landmarks.}
    \label{fig:exp_6WP_22LM_traj}
\end{figure}

\begin{figure}[h!]
    \centering
    \includegraphics[width = 85mm,keepaspectratio]{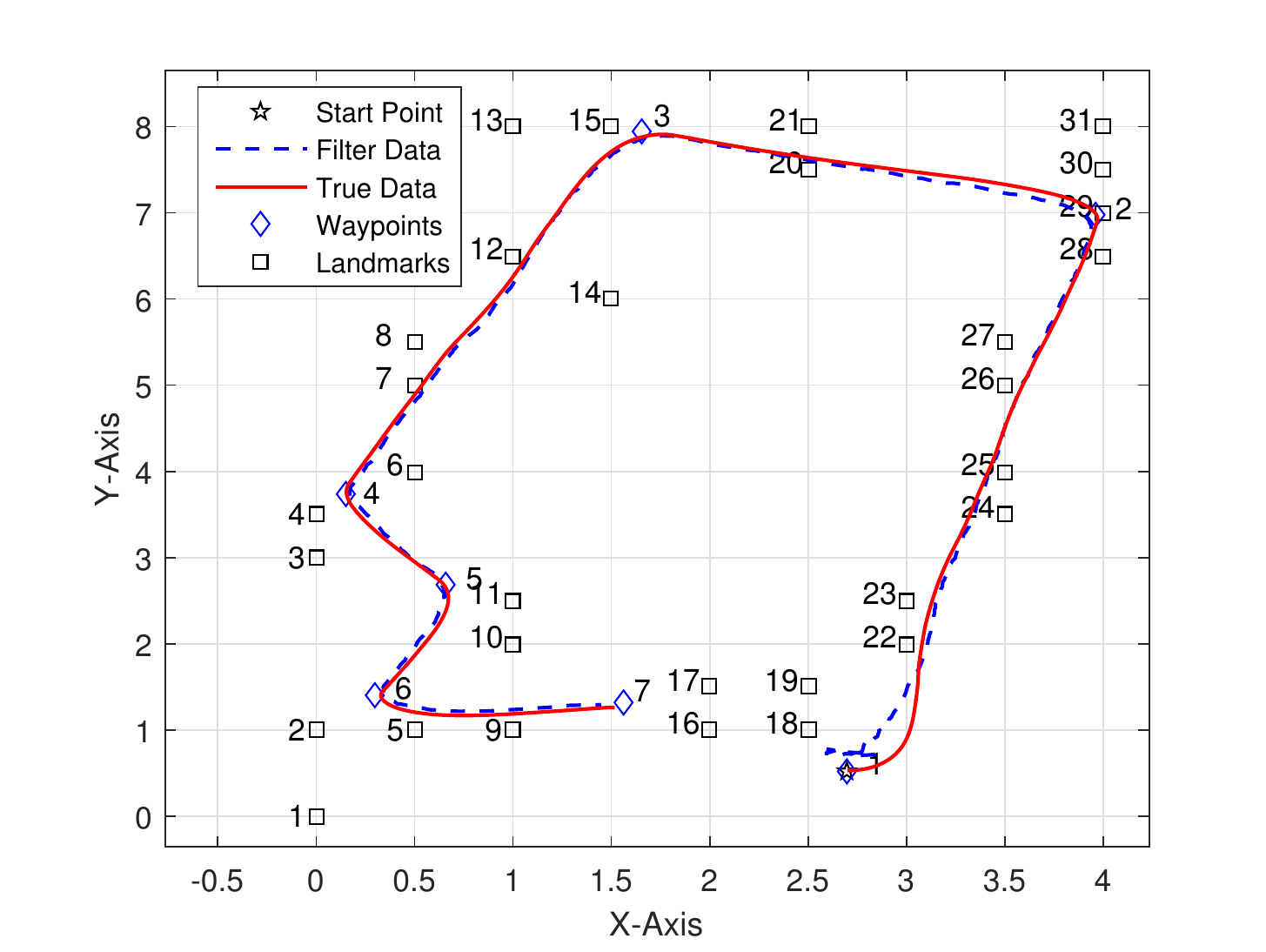}
    \caption{True and estimated trajectories of the vehicle with 7 targets (way points) and 31 landmarks. }
    \label{fig:exp_7WP_31LM_traj}
\end{figure}

The error in the position and the heading of the vehicle corresponding to  Fig. \ref{fig:exp_6WP_22LM_traj} and Fig. \ref{fig:exp_7WP_31LM_traj} are shown in Fig. \ref{fig:exp_6WP_22LM_err} and Fig. \ref{fig:exp_7WP_31LM_err}, respectively. It can be seen that the error lies within the $3\sigma$ bound most of the time, hence ensuring that the placement of landmarks obtained from the greedy algorithm is sufficient for localization of the vehicle. 

\begin{figure}[!h]
    \centering
    \includegraphics[width = 85mm,keepaspectratio]{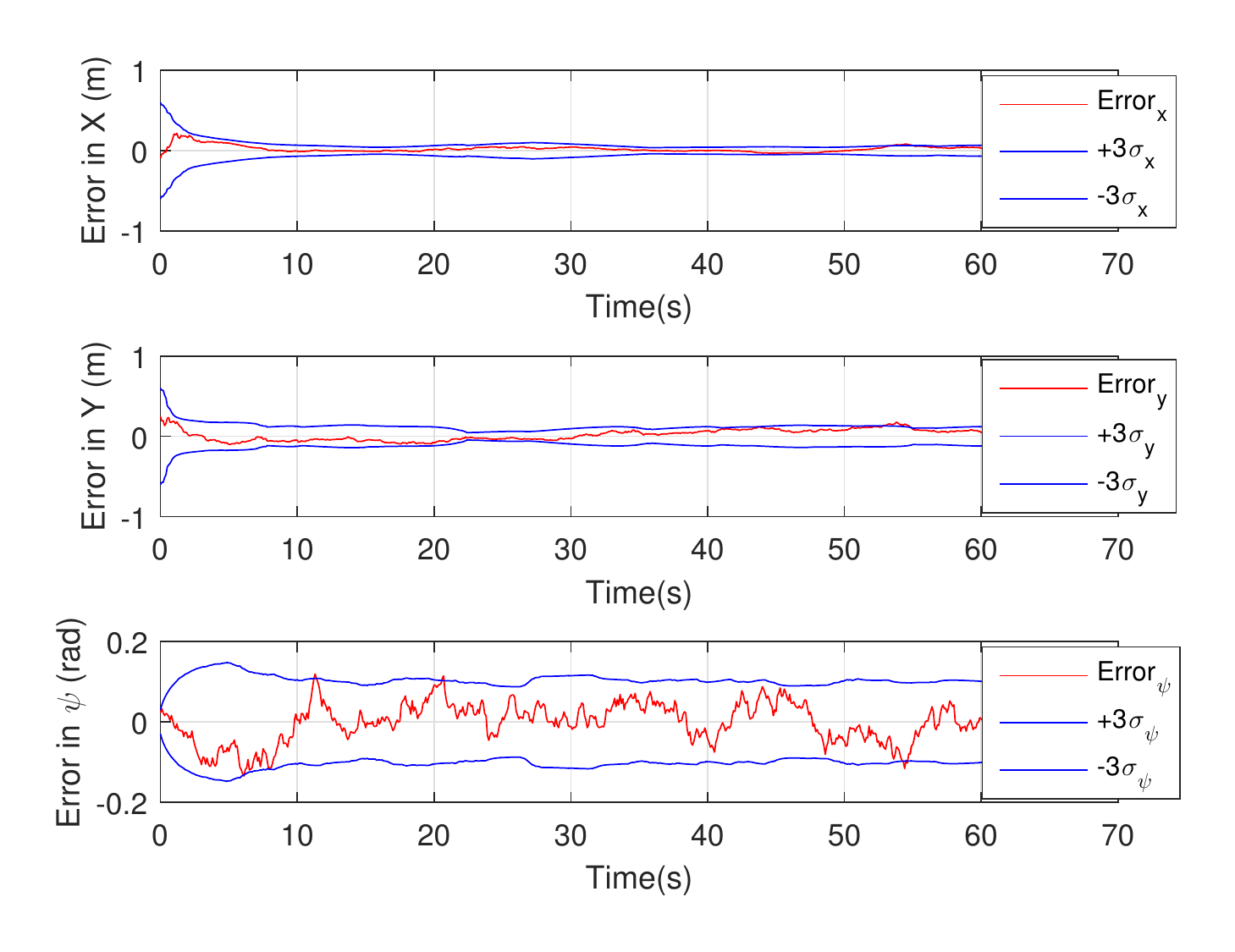}
    \caption{Error in X, Y and $\psi$ (heading) of the vehicle with 6 targets (way points) and 22 landmarks.}
    \label{fig:exp_6WP_22LM_err}
\end{figure}

\begin{figure}[!h]
    \centering
    \includegraphics[width = 85mm,keepaspectratio]{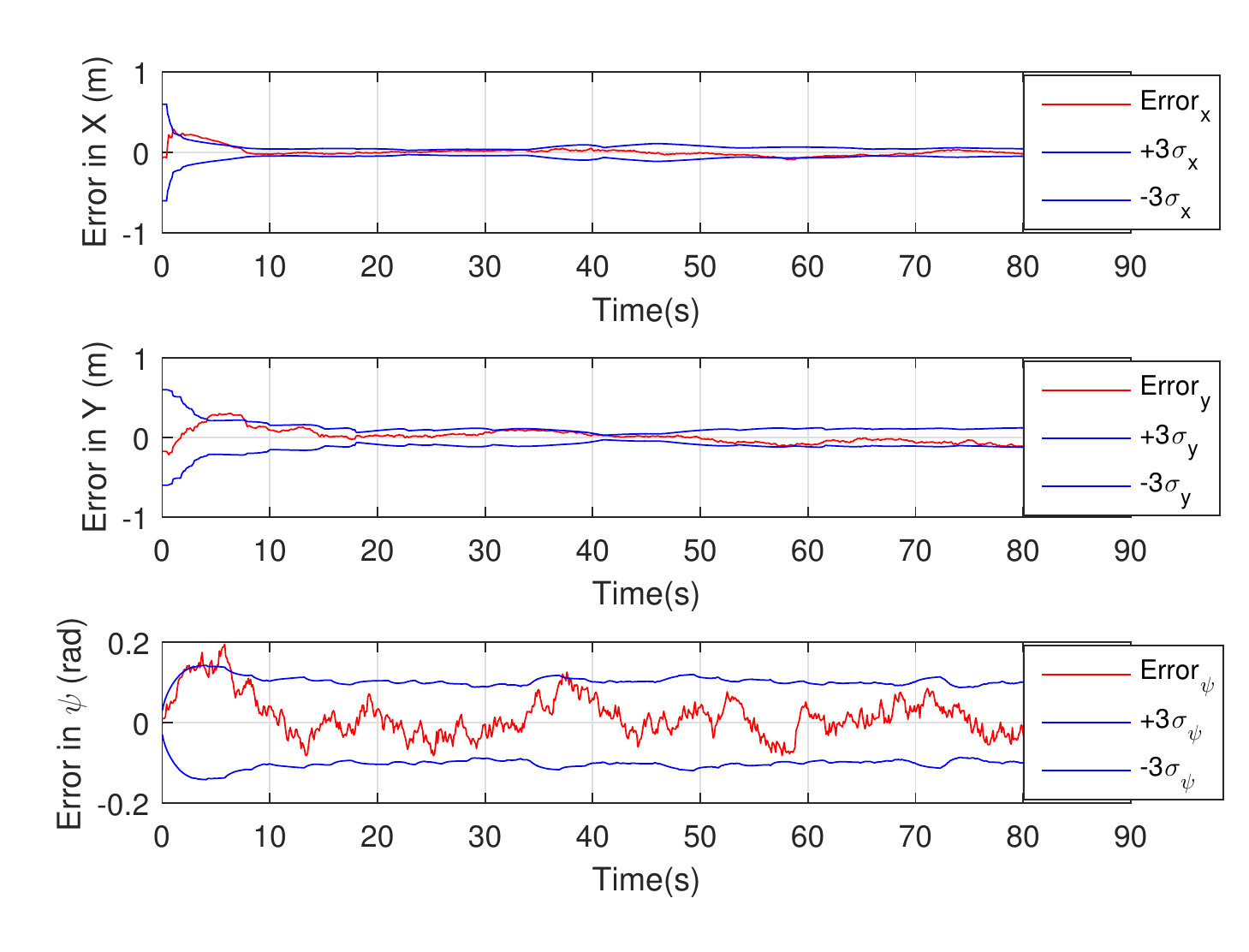}
    \caption{Plot showing the error in X, Y and $\psi$ (heading) of the vehicle with 7 targets (way points) and 31 landmarks.}
    \label{fig:exp_7WP_31LM_err}
\end{figure}

\subsection{Performance of the greedy algorithm} \label{subsec:algo_results}
In this section, we run the greedy algorithm on $9$ randomly generated instances with 5, 6, or 7 targets. The procedure for generating the instances is similar to that of the instances generated for simulation. The Table \ref{tab:1} shows the number of potential locations chosen by the algorithm for landmark placement. 
\begin{table}[h!]
    \centering
    \begin{tabular}{c|ccccccccc}
        \toprule
         Instance \# & 1 & 2 & 3 & 4 & 5 & 6 & 7 & 8 & 9  \\
         \midrule
         \# targets & 5 & 5 & 5 & 6 & 6 & 7 & 7 & 7 & 7 \\
         \# landmarks & 21 & 22 & 12 & 24 & 22 & 20 & 31 & 21 & 12 \\
         \bottomrule
    \end{tabular}
    \caption{Number of landmarks placed by the algorithm.}
    \label{tab:1}
\end{table}

\section{Conclusion} \label{sec:conclusion}
This article develops a greedy algorithm for a landmark placement problem to aid localization of vehicles in GPS-denied environment with collision avoidance constraints while taking into account the field of view of the camera fixed to the vehicle. This is the first approach to consider these constraints. The algorithm presented is optimal for each edge the vehicle traverses. Through simulations, it is shown that the landmark placement obtained using our algorithm can aid the vehicle to estimate its position and heading with a high degree of accuracy. Future work would focus on hardware experiments and jointly optimizing the path and the placement along with the constraints considered in this article; this would result in a path for the UV and a corresponding landmark placement that would yield a lower number of landmarks. 

\section{Acknowledgement} \label{sec:acknowledgement}
This work was in part supported by the U.S. Department of Energy through the LANL/LDRD program and the Center for Nonlinear Studies, and in part by the U.S. National Science Foundation (NSF) 
through NSF-IIS Robust Intelligence Award 1736087. 

\bibliographystyle{plain}
\bibliography{sensor_placement}
\end{document}